\numberwithin{equation}{section}
\theoremstyle{plain}
\newtheorem{theorem}{Theorem}[section]
\theoremstyle{definition}
\newtheorem{example}[theorem]{Example}
\newtheorem{?}[theorem]{Problem}
\def\boxit#1{\leavevmode\hbox{\vrule\vtop{\vbox{\kern.33333pt\hrule
    \kern1pt\hbox{\kern1pt\vbox{#1}\kern1pt}}\kern1pt\hrule}\vrule}}
\begin{document}
\title[Generalizing a partition theorem of Andrews]{Generalizing a partition theorem of Andrews}

\author[S. Fu]{Shishuo Fu}
\address[Shishuo Fu]{College of Mathematics and Statistics, Chongqing University, Huxi Campus LD506, Chongqing 401331, P.R. China}
\email{fsshuo@cqu.edu.cn}

\author[D. Tang]{Dazhao Tang}

\address[Dazhao Tang]{College of Mathematics and Statistics, Chongqing University, Huxi Campus LD206, Chongqing 401331, P.R. China}
\email{dazhaotang@sina.com}

\date{\today}

\begin{abstract}
Abstract: Motivated by Andrews' recent work related to Euler's partition theorem, we consider the set of partitions of an integer $n$ where the set of even parts has exactly $j$ elements, versus the set of partitions of $n$ where the set of repeated parts has exactly $j$ elements. These two sets of partitions turn out to be equinumerous, and this naturally encloses Euler's theorem and Andrews' theorem as two special cases. We give two proofs, one using generating function, and the other is a direct bijection that builds on Glaisher's bijection
\end{abstract}

\subjclass[2010]{05A17, 11P83}

\keywords{Partition; Euler's partition theorem; Glashier's theorem}

\maketitle
\section{Introduction}
A \emph{partition} \cite{Andr1} $\pi$ of a positive integer $n$ is a finite sequence of weakly decreasing positive integers $\pi_{1}\geq\pi_{2}\geq\cdots\geq\pi_{r}>0$ such that $\sum_{i=1}^{r}\pi_{i}=n$. The $\pi_{i}$'s are called the \emph{parts} of $\pi$. Sometimes it is useful to use a notation that makes explicit the number of times that a particular integer occurs as a part and we write
\begin{align*}
\pi=(1^{m_{1}}2^{m_{2}}3^{m_{3}}\cdots),
\end{align*}
where exactly $m_{j}$ of the $\pi_{i}$'s are equal to $j$.

In 1748, Euler \cite{Eul} gave the generating function proof of the following partition theorem:
\begin{theorem}\label{Euler}
The number of partitions of $n$ into odd parts equals the number of partitions of $n$ into distinct parts.
\end{theorem}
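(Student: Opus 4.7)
The plan is to prove Theorem~\ref{Euler} by the classical generating function approach, since it is short and completely mechanical. Let $p_{\mathrm{odd}}(n)$ and $p_{\mathrm{dist}}(n)$ denote, respectively, the number of partitions of $n$ into odd parts and into distinct parts. Expanding in the standard way, one has
\[
\sum_{n\geq 0} p_{\mathrm{odd}}(n)\, q^n \;=\; \prod_{k\geq 1}\frac{1}{1-q^{2k-1}},
\qquad
\sum_{n\geq 0} p_{\mathrm{dist}}(n)\, q^n \;=\; \prod_{k\geq 1}(1+q^k),
\]
because in the first product the factor $1/(1-q^{2k-1})=\sum_{m\geq 0} q^{m(2k-1)}$ records how many times the odd part $2k-1$ is used, and in the second product $(1+q^k)$ records whether the part $k$ appears or not. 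The proof then reduces to the algebraic identity
\[
\prod_{k\geq 1}(1+q^k)\;=\;\prod_{k\geq 1}\frac{1-q^{2k}}{1-q^k}\;=\;\prod_{k\geq 1}\frac{1}{1-q^{2k-1}},
\]
where the first equality uses $(1+q^k)(1-q^k)=1-q^{2k}$, and the second comes from cancelling the factors $1-q^{2k}$ in the numerator against the even-indexed factors $1-q^{2k}$ in the denominator, leaving only the odd-indexed ones.

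As a parallel track, and to foreshadow the bijective proof that the paper is building toward, I would also spell out Glaisher's bijection: given a partition of $n$ into odd parts, write the multiplicity $m_j$ of each odd part $j$ in binary as $m_j=\sum_i 2^{a_i}$, and replace each block of $2^{a_i}$ copies of $j$ by a single part $2^{a_i}j$. Since every positive integer admits a unique factorization as $2^{a}\cdot(\text{odd})$, the resulting parts are pairwise distinct, and running the procedure in reverse (take each distinct part, factor out its largest power of $2$, and then replace it by that many copies of the odd cofactor) recovers the original partition.

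Neither route presents a genuine obstacle: the generating function proof is a single telescoping cancellation, and the bijective proof rests only on the uniqueness of the $2^{a}\cdot(\text{odd})$ decomposition of a positive integer. If anything, the subtle point is bookkeeping on the bijective side, because this is where the structural refinements that the paper plans to exploit (tracking even parts, repeated parts, and their cardinalities) will naturally enter; a clean exposition of Glaisher's map is therefore the step I would invest the most care in.
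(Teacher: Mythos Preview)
Your proposal is correct and matches the paper's own treatment: the paper does not give a standalone proof of Theorem~\ref{Euler} but obtains it as the $j=0$, $k=2$ case of Theorem~\ref{jk analog}, whose two proofs (the telescoping generating-function identity $\prod(1-q^{kn})/(1-q^n)$ and the Glaisher-type merge/split bijection $\varphi_k$) specialize precisely to the two arguments you wrote out. Your exposition of Glaisher's map via the binary expansion of multiplicities is exactly the $k=2$ instance of the paper's $\varphi_k$ restricted to $\widetilde{\pi}$, so nothing further is needed.
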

When odd numbers are viewed as numbers not divisible by $2$, the following theorem of Glaisher \cite[Corollary~1.3]{Andr1} can be viewed as a natural generalization of Theorem~\ref{Euler}.
\begin{theorem}\label{Glaisher}
The number of partitions of $n$ into parts not divisible by $d$ equals the number of partitions of $n$ into parts each occuring no more than $d-1$ times.
\end{theorem}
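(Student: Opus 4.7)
My plan is to prove Glaisher's theorem by showing that both sides have the same generating function, exploiting the fact that in both cases the partitions are unrestricted across parts, so the generating function factors as a product over each possible part $k \geq 1$. First, I would write down the two generating functions, manipulate each into the same closed form, and then extract the coefficient of $q^n$.

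For partitions of $n$ into parts not divisible by $d$, each admissible part $k$ (with $d \nmid k$) can occur any nonnegative integer number of times, contributing a factor $(1-q^k)^{-1}$. Thus the generating function is $\prod_{k \geq 1,\, d \nmid k} (1-q^k)^{-1}$. To put this into a more symmetric form, I would multiply numerator and denominator by the missing factors $(1-q^{dk})$ for $k \geq 1$, so that the denominator becomes the full product $\prod_{k \geq 1}(1-q^k)$ while the numerator is $\prod_{k \geq 1}(1-q^{dk})$.

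For partitions in which each part occurs at most $d-1$ times, each part $k \geq 1$ can appear $0, 1, \ldots, d-1$ times, contributing the factor $1 + q^k + q^{2k} + \cdots + q^{(d-1)k}$. Summing this finite geometric series gives $(1-q^{dk})/(1-q^k)$, so the full generating function becomes $\prod_{k \geq 1} (1-q^{dk})/(1-q^k)$, matching the expression above. Comparing coefficients of $q^n$ then yields the theorem.

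An alternative approach, more combinatorial and directly relevant to the bijective portion of the present paper, is Glaisher's bijection itself. Given a partition counted by the left-hand side, one writes the multiplicity $m_j$ of each admissible part $j$ in base $d$ as $m_j = \sum_{i \geq 0} a_i(j)\, d^i$ with $0 \leq a_i(j) \leq d-1$, and then sends the input to the partition in which $j\, d^i$ appears $a_i(j)$ times for each $i \geq 0$. Because $d \nmid j$, the values $j\, d^i$ are pairwise distinct as $(j,i)$ varies, so each multiplicity in the output is at most $d-1$; the inverse uses the unique factorization $k = j\, d^i$ with $d \nmid j$. There is no substantial obstacle in either approach: the generating function identity is automatic as formal power series, and the main (very mild) point to check in the bijective version is that the two maps are mutually inverse, which reduces to the uniqueness of base-$d$ representation together with the unique factorization $k = j\, d^i$.
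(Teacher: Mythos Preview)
Your proposal is correct. The paper does not give a standalone proof of Glaisher's theorem (it is cited from \cite{Andr1}), but it recovers it as the $j=0$ case of Theorem~\ref{jk analog}; specializing the paper's generating function proof there to $z=0$ yields exactly your product identity $\prod_{n\ge 1}(1-q^{dn})/(1-q^n)$, and the paper's bijection $\varphi_k$ restricted to $k$-regular partitions is precisely Glaisher's map, described iteratively rather than via your base-$d$ expansion of multiplicities.
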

Partitions into parts not divisible by $d$ are usually called \emph{$d$-regular partitions}, and this is the term we use in the sequel.

In order to solve two conjectures of Beck posted on the OEIS (see \cite{OEIS1} and \cite{OEIS2}), Andrews \cite{Andr2} established the following
\begin{theorem}[Theorem~1 in \cite{Andr2}]\label{Andrews}
If $a(n)$ denotes the number of partitions of $n$ such that the set of even parts has only one element (possibly repeated), $b(n)$ counts the difference between the number of parts in the odd partitions of $n$ and the number of parts in the distinct partitions of $n$, and $c(n)$ denotes the number of partitions of $n$ where exactly one part is repeated. Then $a(n)=b(n)=c(n)$ for all $n\geq1$.
\end{theorem}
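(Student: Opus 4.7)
\medskip

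\noindent\textbf{Proof plan.} I would prove $a(n)=b(n)=c(n)$ by computing all three generating functions in closed form and reducing each equality to an elementary identity. Euler's identity $\prod_{i\ge1}(1+q^i)=\prod_{i\ge1}(1-q^{2i-1})^{-1}$ (Theorem~\ref{Euler}) will act as the bridge between the ``even/odd'' side and the ``repeated/distinct'' side throughout.

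For $a(n)=c(n)$, I would classify an $a$-partition by the unique even value $2k$ (carrying some multiplicity $\ge 1$) while letting the odd parts vary freely, and classify a $c$-partition by the unique repeated value $j$ (carrying multiplicity $\ge 2$) while letting the other parts form a distinct partition avoiding $j$. This yields
\[
A(q):=\sum_{n\ge 1} a(n)q^n = \left(\sum_{k\ge1}\frac{q^{2k}}{1-q^{2k}}\right)\prod_{i\ge1}\frac{1}{1-q^{2i-1}},
\]
\[
C(q):=\sum_{n\ge 1} c(n)q^n = \sum_{j\ge1}\frac{q^{2j}}{1-q^{j}}\prod_{i\neq j}(1+q^{i}) = \left(\sum_{j\ge1}\frac{q^{2j}}{1-q^{2j}}\right)\prod_{i\ge1}(1+q^{i}),
\]
the second equality following by pulling the factor $(1+q^{j})$ back into the product. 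Euler's identity equates the two infinite products, so $A(q)=C(q)$.

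For $a(n)=b(n)$, I would mark one instance of a part in each of the two weighted generating functions (i.e.\ take the $z$-derivative of $\prod_i(1-zq^{2i-1})^{-1}$ and of $\prod_i(1+zq^i)$ at $z=1$). After factoring out the common product via Euler's identity, I obtain
\[
B(q):=\sum_{n\ge 1} b(n)q^n = \prod_{i\ge1}\frac{1}{1-q^{2i-1}}\cdot\left(\sum_{m\ge1}\frac{q^{2m-1}}{1-q^{2m-1}} - \sum_{m\ge1}\frac{q^{m}}{1+q^{m}}\right).
\]
Comparing with the closed form of $A(q)$, the theorem then reduces to the Lambert-type identity
\[
\sum_{m\ge1}\frac{q^{2m-1}}{1-q^{2m-1}} - \sum_{m\ge1}\frac{q^{m}}{1+q^{m}} = \sum_{k\ge1}\frac{q^{2k}}{1-q^{2k}}.
\]
To verify this, I would substitute $\frac{q^{m}}{1+q^{m}}=\frac{q^{m}}{1-q^{2m}}-\frac{q^{2m}}{1-q^{2m}}$ to collapse the middle sum, leaving only the claim $\sum_{m\ge1}\frac{q^{2m-1}}{1-q^{2m-1}} = \sum_{m\ge1}\frac{q^{m}}{1-q^{2m}}$; both sides expand to $\sum_{n\ge1} d_{\mathrm{odd}}(n)\,q^n$, the Lambert series of the odd-divisor function, since each is the generating function for factorizations $n=(\text{odd})\cdot(\text{positive integer})$.

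The main technical point is the Lambert identity in the second step; it is elementary but needs the right rearrangement, and it is the only place where the ``part-counting'' statistic $b(n)$ genuinely interacts with the combinatorics. As an alternative I would sketch a bijection, building on Glaisher's map from Theorem~\ref{Glaisher} at $d=2$, that sends an $a$-partition $\lambda$ with distinguished even part $2k=2^a\ell$ (with $\ell$ odd) to a $c$-partition by applying Glaisher's odd-to-distinct procedure to the odd parts of $\lambda$ while carrying along the $2k$-block in a controlled way, so that the distinguished even value $2k$ in the preimage becomes the unique repeated value in the image. Producing a clean, invertible description of this modified map is the main obstacle on the bijective side, since standard Glaisher does not automatically preserve ``exactly one repeated part''.
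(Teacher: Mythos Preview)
Your proof is correct and matches the paper's approach: the paper does not prove Theorem~\ref{Andrews} in isolation but proves the generalizations Theorems~\ref{jk analog} and~\ref{k analog:a=b=c}, whose $k=2$ specialization is precisely your argument---the same generating functions for $a(n)$ and $c(n)$, the same differentiation-at-$z=1$ device for $b(n)$, and your Lambert identity $\sum_{m\ge1}\frac{q^{2m-1}}{1-q^{2m-1}}=\sum_{m\ge1}\frac{q^{m}}{1-q^{2m}}$ is exactly the $k=2$ case of the identity used in the last step of the proof of Theorem~\ref{k analog:a=b=c}. Your bijective sketch via Glaisher's map also parallels the paper's combinatorial proof of Theorem~\ref{jk analog}.
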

In particular, the assertion $a(n)=b(n)$ verifies Beck's conjecture listed in \cite{OEIS1}.
In this note, we give two generalizations of Andrews' theorem. Let $O_{j,k}(n)$ denote the number of partitions of $n$ such that there are exactly $j$ different parts $\equiv0\pmod{k}$ (possibly repeated), and $D_{j,k}(n)$ denote the number of partitions of $n$ such that there are exactly $j$ different parts repeated at least $k$ times. We obtain the first generalization of Theorem~\ref{Andrews}.
\begin{theorem}\label{jk analog}
For all $j, n\geq0$ and $k\geq1$, $O_{j,k}(n)=D_{j,k}(n)$.
\end{theorem}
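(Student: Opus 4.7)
The plan is to prove $O_{j,k}(n)=D_{j,k}(n)$ for all admissible $(j,k,n)$ at once by verifying that the bivariate generating functions
\[
\sum_{n,j\geq 0} O_{j,k}(n)\, z^{j} q^{n}
\qquad\text{and}\qquad
\sum_{n,j\geq 0} D_{j,k}(n)\, z^{j} q^{n}
\]
agree as formal power series in $\Z[[z,q]]$; extracting the coefficient of $z^{j}q^{n}$ then settles the theorem. The parameter $z$ will act as a catalyst marking precisely the statistic in question, namely one $z$ per distinct part of the relevant type, independent of its multiplicity.

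For $O_{j,k}$, I would split the partition into its parts not divisible by $k$ and those divisible by $k$. The former contribute the unmarked factor $\prod_{k\nmid m}(1-q^{m})^{-1}$. For each $m\geq 1$, the potential part $km$ is either absent (factor $1$) or is present with some positive multiplicity, in which case it contributes once to $j$ and thus yields the factor $1+z\,q^{km}/(1-q^{km})=(1-(1-z)q^{km})/(1-q^{km})$. Appealing to Glaisher's identity $\prod_{k\nmid m}(1-q^{m})^{-1}=\prod_{m\geq 1}(1-q^{km})/(1-q^{m})$---the identity that underlies Theorem~\ref{Glaisher}---the two products telescope to
\[
\sum_{n,j\geq 0} O_{j,k}(n)\, z^{j}q^{n}=\prod_{m\geq 1}\frac{1-(1-z)q^{km}}{1-q^{m}}.
\]

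For $D_{j,k}$, I would sort by the multiplicity of each part $m\geq 1$: if $m$ appears $0,1,\ldots,k-1$ times the factor is the unmarked geometric sum $(1-q^{km})/(1-q^{m})$, while if it appears at least $k$ times the factor is the marked $z\,q^{km}/(1-q^{m})$. Summing the two cases gives the same per-part factor $(1-(1-z)q^{km})/(1-q^{m})$ as before, so
\[
\sum_{n,j\geq 0} D_{j,k}(n)\, z^{j}q^{n}=\prod_{m\geq 1}\frac{1-(1-z)q^{km}}{1-q^{m}},
\]
which coincides with the generating function just derived for $O_{j,k}$.

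The only subtle point is conceptual rather than algebraic: one must be careful that a \emph{single} $z$ is attached per part-size, regardless of how many times that part is repeated, so that $z^{j}$ genuinely records the cardinality of the underlying set of distinguished parts. Once this $z$-refinement is in place, the identity reduces to Glaisher's classical factorization. As a sanity check, specializing $z=1$ recovers the partition function $\prod_{m\geq 1}(1-q^{m})^{-1}$ on both sides, $j=0$ recovers Theorem~\ref{Glaisher} (and hence Theorem~\ref{Euler}), and $k=2$, $j=1$ recovers the $a(n)=c(n)$ part of Theorem~\ref{Andrews}. A parallel bijective proof would presumably apply Glaisher's doubling--collapsing operation separately to the ``$k$-divisible'' and ``non-$k$-divisible'' subpartitions of $\pi$; the delicate step there would be verifying that the decomposition transports the statistic $j$ faithfully between the two sides.
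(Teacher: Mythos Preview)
Your proof is correct and follows essentially the same generating-function approach as the paper: both set up the bivariate series $\sum O_{j,k}(n)z^jq^n$ and $\sum D_{j,k}(n)z^jq^n$, attach a single $z$ per distinguished part-size, and observe that the two products coincide. The only cosmetic difference is that you collapse the per-part factor to the closed form $(1-(1-z)q^{km})/(1-q^m)$, whereas the paper leaves it as the product $\prod_{m}\bigl(1+zq^{km}/(1-q^{km})\bigr)\prod_{n}(1-q^{kn})/(1-q^n)$.
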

Evidently, $j=0,k=2$ gives us Theorem~\ref{Euler}, $j=0, k\geq 2$ produces Theorem~\ref{Glaisher}, and $j=1, k=2$ leads us back to the $a(n)=c(n)$ part of Theorem~\ref{Andrews}.

Next, in order to state another generalization of Theorem~\ref{Andrews} involving $b(n)$, let $E_{k}(n)$ denote the excess of the number of parts $\equiv1\pmod{k}$ in the $k$-regular partitions of $n$, over the number of distinct parts in the partitions of $n$ with no parts being repeated more than $k-1$ times, then we have the following theorem, which is precisely Theorem~\ref{Andrews} when we set $k=2$.
\begin{theorem}\label{k analog:a=b=c}
For all $n\geq0$ and $k\geq2$, $O_{1,k}(n)=D_{1,k}(n)=E_{k}(n)$.
\end{theorem}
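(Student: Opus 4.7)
The plan is to reduce the theorem to a generating function identity. The first equality $O_{1,k}(n) = D_{1,k}(n)$ is the $j=1$ case of Theorem~\ref{jk analog}, so it suffices to establish $O_{1,k}(n) = E_{k}(n)$. Writing
$$P_{k}(q) := \prod_{i \not\equiv 0 \pmod{k}} \frac{1}{1-q^{i}}$$
for the generating function of $k$-regular partitions, a partition counted by $O_{1,k}(n)$ decomposes uniquely as a choice of one distinct multiple of $k$ (appearing at least once) together with an arbitrary $k$-regular partition, so
$$\sum_{n \geq 0} O_{1,k}(n)\, q^{n} = P_{k}(q) \sum_{m \geq 1} \frac{q^{km}}{1-q^{km}}.$$

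For $E_{k}(n) = A_{k}(n) - B_{k}(n)$, where $A_{k}$ counts parts $\equiv 1 \pmod{k}$ in $k$-regular partitions and $B_{k}$ counts distinct part sizes among partitions with all multiplicities $<k$, I would extract each piece via the standard marking/logarithmic-derivative trick. Attaching a variable $z$ to every part $\equiv 1 \pmod{k}$ in a $k$-regular partition and differentiating at $z=1$ gives
$$\sum_{n} A_{k}(n)\, q^{n} = P_{k}(q) \sum_{i \equiv 1 \pmod{k}} \frac{q^{i}}{1-q^{i}}.$$
For $B_{k}$, the generating function for partitions with multiplicities less than $k$ factors as $\prod_{j \geq 1}(1 + z h_{j})$ with $h_{j} = q^{j} + q^{2j} + \cdots + q^{(k-1)j}$; marking each occurring part size and computing the logarithmic derivative at $z=1$ uses the simplification $h_{j}/(1+h_{j}) = (q^{j}-q^{kj})/(1-q^{kj})$, and invoking Glaisher's identity $\prod_{j \geq 1}(1-q^{kj})/(1-q^{j}) = P_{k}(q)$ yields
$$\sum_{n} B_{k}(n)\, q^{n} = P_{k}(q) \sum_{j \geq 1} \frac{q^{j}-q^{kj}}{1-q^{kj}}.$$

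After factoring out $P_{k}(q)$, the desired equality $\sum_n O_{1,k}(n)q^n = \sum_n E_{k}(n)q^n$ reduces to the series identity
$$\sum_{i \equiv 1 \pmod{k}} \frac{q^{i}}{1-q^{i}} - \sum_{j \geq 1} \frac{q^{j}-q^{kj}}{1-q^{kj}} = \sum_{m \geq 1} \frac{q^{km}}{1-q^{km}}.$$
Splitting the middle sum and cancelling $\sum_{j \geq 1} q^{kj}/(1-q^{kj})$ against the right-hand side reduces this further to
$$\sum_{i \equiv 1 \pmod{k}} \frac{q^{i}}{1-q^{i}} = \sum_{j \geq 1} \frac{q^{j}}{1-q^{kj}},$$
which I would verify by observing that the coefficient of $q^{n}$ on each side counts the divisors of $n$ congruent to $1$ modulo $k$ (for the right-hand side, via the substitution $d = 1+k\ell$ in $n = j(1+k\ell)$). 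The main obstacle I anticipate is not conceptual but bookkeeping: correctly setting up the two generating functions for $E_{k}(n)$, and in particular spotting that Glaisher's theorem is precisely what aligns the common factor $P_{k}(q)$ across the $A_{k}$, $B_{k}$ and $O_{1,k}$ generating functions. Once that alignment is achieved, the surviving arithmetic identity is elementary.
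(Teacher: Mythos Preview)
Your proposal is correct and follows essentially the same route as the paper: set up the two $z$-marked products, differentiate at $z=1$, factor out $P_k(q)=\prod_n(1-q^{kn})/(1-q^n)$, and finish with the Lambert-series identity $\sum_{i\equiv 1\pmod k} q^i/(1-q^i)=\sum_{j\ge 1} q^j/(1-q^{kj})$. The only cosmetic difference is that you invoke Theorem~\ref{jk analog} for $O_{1,k}=D_{1,k}$, whereas the paper extracts the $z^1$ coefficient directly from the same generating functions; the computations and the key identity coincide.
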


\section{Proof of Theorems}
In this section, we first give a generating function proof as well as a bijective proof of Theorem~\ref{jk analog}. This partially answers to Andrews' request \cite{Andr2}. Then we follow Andrews' method and give a generating function proof of Theorem~\ref{k analog:a=b=c}.

\begin{proof}[Generating function proof of Theorem \ref{jk analog}]\ \\
Firstly, define the following two bivariate generating functions
\begin{align*}
\mathcal{O}_{k}(z,q) &:=\sum_{n=0}^{\infty}\sum_{j=0}^{\infty}O_{j,k}(n)z^{j}q^{n},\\
\mathcal{D}_{k}(z,q) &:=\sum_{n=0}^{\infty}\sum_{j=0}^{\infty}D_{j,k}(n)z^{j}q^{n}.
\end{align*}

Obviously, we have
\begin{align}
\mathcal{O}_{k}(z,q) &=\prod_{m=1}^{\infty}\left(1+zq^{km}+zq^{2km}+zq^{3km}+\cdots\right)\prod_{n=1\atop n\not\equiv0\pmod{k}}^{\infty}\dfrac{1}{1-q^{n}}\label{gene fun1}\\
 &=\prod_{m=1}^{\infty}\left(1+\dfrac{zq^{km}}{1-q^{km}}\right)\prod_{n=1}^{\infty}\dfrac{1-q^{kn}}{1-q^{n}},\nonumber
\end{align}
and
\begin{align}
\mathcal{D}_{k}(z,q) &=\prod_{n=1}^{\infty}\left(1+q^{n}+q^{2n}+\cdots+q^{(k-1)n}+zq^{kn}+zq^{(k+1)n}+\cdots\right)\label{gene fun2}\\
 &=\prod_{n=1}^{\infty}\left(1+q^{n}+q^{2n}+\cdots+q^{(k-1)n}\right)\left(1+zq^{kn}+zq^{2kn}+\cdots\right)\nonumber\\
 &=\prod_{n=1}^{\infty}\dfrac{1-q^{kn}}{1-q^{n}}\prod_{m=1}^{\infty}\left(1+\dfrac{zq^{km}}{1-q^{km}}\right).\nonumber
\end{align}
Equating the coefficients of $z^{j}q^{n}$ for $\mathcal{O}_{k}(z,q)$ and $\mathcal{D}_{k}(z,q)$ finishes the proof.
\end{proof}
\begin{proof}[Bijective proof of Theorem \ref{jk analog}]\ \\
We construct a family of bijections $\{\varphi_{k}\}_{k\geq 1}$ on $\mathcal{P}$, the set of all partitions. For $k=1$, we simply take $\varphi_1$ to be the identity map since in this case the two sets of partitions in question coincide with each other. For general $k\geq 2$, let $\pi=\pi_{1}+\cdots+\pi_{r}$ be a partition. We discuss by two cases, depending on whether the part $\pi_{i}$ is divisible by $k$ or not.
\begin{itemize}
\item $\pi_{i}\equiv0\pmod{k}$: Denote by $\overline{\pi}$ the partition composed of all parts $\equiv0\pmod{k}$ in $\pi$. In this case, we simply map the part $\pi_{i}$ to the $k$-copies of equal parts
\begin{align*}
\underbrace{\dfrac{\pi_{i}}{k}+\dfrac{\pi_{i}}{k}+\cdots+\dfrac{\pi_{i}}{k}}_{k~\textrm{copies}}.
\end{align*}
Collect all parts converted this way and denote as $\overline{\lambda}$ the partition composed of them. Consequently, the parts in $\overline{\lambda}$ are all repeated at least $k$ times, which means we can identify them when we apply the inverse map. Actually the multiplicity of every part in $\overline{\lambda}$ must be a multiple of $k$.

\item $\pi_{i}\not\equiv0\pmod{k}$: Denote by $\tilde{\pi}$ the partition composed of all parts $\not\equiv0\pmod{k}$ in $\pi$. Assume $\widetilde{\pi}$ has the form
\begin{align*}
\underbrace{q_{1}+q_{1}+\cdots+q_{1}}_{m_{1}~\textrm{times}}+\underbrace{q_{2}+q_{2}+\cdots+q_{2}}_{m_{2}~\textrm{times}}+\cdots+\underbrace{q_{s}+q_{s}+\cdots+q_{s}}_
{m_{s}~\textrm{times}}.
\end{align*}
We now consider two cases again, depending on whether $m_{i}$ is no less than $k$ or not. If $m_{i}\geq k$, then we split $k$ copies off of the part $q_{i}$ and combine (add) these $k$ parts of equal size into a new part, and we keep splitting $k$ copies off of the remaining parts (if any) and combining these parts of equal size into a new parts until there are no more parts repeated more than $k-1$ times, then we eventually obtain a partition $\widetilde{\lambda}$ where all parts occur less than $k$ times. This is essentially Glaisher's map for proving Theorem~\ref{Glaisher}.

Finally let $\varphi_k(\pi):=\lambda=\overline{\lambda}\bigcup\widetilde{\lambda}$, where the union $\overline{\lambda}\bigcup\widetilde{\lambda}$ is the partition consists of all parts in $\overline{\lambda}$ and $\widetilde{\lambda}$ (arranged in nonincreasing order). We observe that the number of different parts $\equiv0\pmod{k}$ in partition $\pi$ is the same as the number of parts that are repeated more than $k-1$ times in $\lambda$ via the above map $\varphi_k$.
\end{itemize}

The inverse map should mostly be clear, with the only caveat being that when we decompose $\lambda$ as $\overline{\lambda}\bigcup\widetilde{\lambda}$, $\overline{\lambda}$ consists of those parts in $\lambda$ being repeated, say $m\geq k$ times, but in general not all $m$ copies of this part is included in $\overline{\lambda}$. More precisely, we can uniquely write $m=tk+l$, with $0\leq l<k$, then exactly $tk$ copies of this part are included in $\overline{\lambda}$, with the remaining $l$ copies included in $\widetilde{\lambda}$.
\end{proof}
\begin{example}\label{exam2}
Table \ref{Tab2} lists all partitions of $16$ enumerated by $O_{2,4}(16)$ and $D_{2,4}(16)$ respectively, with one-to-one correspondence between each row via $\varphi_{4}$.
\end{example}
\begin{table}[tbp]\caption{One-to-one correspondence in Example \ref{exam2}}\label{Tab2}
\centering
\begin{tabular}{|c||c|}
$\equiv0\pmod{4}$ &$f_{i}\geq4$\\
$(12,4)$ &$(3,3,3,3,1,1,1,1)$\\
$(8,4,4)$ &$(2,2,2,2,1,1,1,1,1,1,1,1)$\\
$(8,4,3,1)$ &$(3,2,2,2,2,1,1,1,1,1)$\\
$(8,4,2,2)$ &$(2,2,2,2,2,2,1,1,1,1)$\\
$(8,4,2,1,1)$ &$(2,2,2,2,2,1,1,1,1,1,1)$\\
$(8,4,1,1,1,1)$ &$(4,2,2,2,2,1,1,1,1)$
\end{tabular}
\end{table}

\begin{proof}[Proof of Theorem~\ref{k analog:a=b=c}]
The proof of Theorem \ref{k analog:a=b=c} relies on generating functions and differentiation. According to \eqref{gene fun1} and \eqref{gene fun2}, by extracting the coefficients of $z$ in both generating functions we have
\begin{align}\label{gf:o1k}
\sum_{n=0}^{\infty}O_{1,k}(n)q^{n}=\sum_{n=0}^{\infty}D_{1,k}(n)q^{n}=\sum_{m=1}^{\infty}\dfrac{q^{km}}{1-q^{km}}\prod_{n=1}^{\infty}\dfrac{1-q^{kn}}{1-q^{n}}.
\end{align}

Moreover, in the infinite product
\begin{align*}
\prod_{n=1}^{\infty}\dfrac{1}{(1-zq^{k(n-1)+1})(1-q^{k(n-1)+2})\cdots(1-q^{k(n-1)+k-1})},
\end{align*}
the coefficient of $z^{M}q^{N}$ is the number of $k$-regular partitions of $N$ with exactly $M$ parts $\equiv1\pmod{k}$, while in the infinite product
\begin{align*}
\prod_{n=1}^{\infty}\left(1+zq^{n}+zq^{2n}+\cdots+zq^{(k-1)n}\right),
\end{align*}
the coefficient of $z^{M}q^{N}$ is the number of partitions of $N$ into $M$ different parts, each part repeated less than $k$ times.

Thus if we differentiate each of these functions with respect to $z$ we will then be counting each partition with $M$ designated parts with weight $M$. Consequently
\begin{align*}
\sum_{n=0}^{\infty}D_k(n)q^{n}
 =&\dfrac{\partial}{\partial z}\bigg|_{z=1}\bigg(\prod_{n=1}^{\infty}\dfrac{1}{(1-zq^{k(n-1)+1})(1-q^{k(n-1)+2})\cdots(1-q^{k(n-1)+k-1})} \\
 & -\prod_{n=1}^{\infty}\big(1+zq^{n}+zq^{2n}+\cdots+zq^{(k-1)n}\big)\bigg)\\
 =&\sum_{n=1}^{\infty}\dfrac{q^{k(n-1)+1}}{(1-q^{k(n-1)+1})^{2}(1-q^{k(n-1)+2})\cdots(1-q^{k(n-1)+k-1})}\times\\
 &\prod_{m=1\atop m\neq n}^{\infty}\dfrac{1}{(1-q^{k(m-1)+1})(1-q^{k(m-1)+2})\cdots(1-q^{k(m-1)+k-1})}\\
 &-\sum_{n=1}^{\infty}\bigg(q^{n}+q^{2n}+\cdots+q^{(k-1)n}\bigg)\prod_{m=1\atop m\neq n}^{\infty}\big(1+q^{m}+q^{2m}+\cdots+q^{(k-1)m}\big)\\
 =&\prod_{m=1}^{\infty}\dfrac{1-q^{km}}{1-q^{m}}\sum_{n=1}^{\infty}\dfrac{q^{k(n-1)+1}}{1-q^{k(n-1)+1}}-\sum_{n=1}^{\infty}\dfrac{q^{n}-q^{kn}}{1-q^{kn}}\prod_{m=1}^{\infty}\dfrac{1-q^{km}}{1-q^{m}}\\
 =&\sum_{n=1}^{\infty}\dfrac{q^{kn}}{1-q^{kn}}\prod_{m=1}^{\infty}\dfrac{1-q^{km}}{1-q^{m}},
\end{align*}
where the last equality follows from
\begin{align*}
\sum_{n=1}^{\infty}\dfrac{q^{k(n-1)+1}}{1-q^{k(n-1)+1}}=\sum_{n=1}^{\infty}\sum_{m=1}^{\infty}q^{m(k(n-1)+1)}=\sum_{m=1}^{\infty}\dfrac{q^{m}}{1-q^{km}}.
\end{align*}

Comparing the final expression with \eqref{gf:o1k} completes the proof.
\end{proof}
\begin{example}\label{exam1}
Table \ref{Tab1} lists all four types of partitions of 8, i.e., partitions with exactly one part divisible by 3, partitions with exactly one part repeated at least 3 times, $3$-regular partitions and partitions all of whose parts are repeated at most 2 times. One checks that $O_{1,3}(8)=D_{1,3}(8)=9, E_{3}(8)=37-28=9$.
\end{example}
\begin{table}[tbp]\caption{Three-way correspondence in Example \ref{exam1}}\label{Tab1}
\centering
\begin{tabular}{c||c||cc}
$\equiv0\pmod{3}$ &$f_{i}\geq3$ &3-regular partitions &$f_{i}\leq2$\\
$(6,2)$ &$(2,2,2,2)$ &$(8)$ &$(8)$\\
$(6,1,1)$ &$(2,2,2,1,1)$ &$(7,1)$ &$(7,1)$\\
$(3,3,2)$ &$(2,1,1,1,1,1,1)$ &$(2,2,2,2)$ &$(6,2)$\\
$(3,3,1,1)$ &$(1,1,1,1,1,1,1,1)$ &$(2,2,2,1,1)$ &$(6,1,1)$\\
$(5,3)$ &$(5,1,1,1)$ &$(5,1,1,1)$ &$(5,3)$\\
$(4,3,1)$ &$(4,1,1,1,1)$ &$(5,2,1)$ &$(5,2,1)$\\
$(3,2,2,1)$ &$(2,2,1,1,1,1)$ &$(4,4)$ &$(4,4)$\\
$(3,2,1,1,1)$ &$(3,2,1,1,1)$ &$(4,1,1,1,1)$ &$(4,3,1)$\\
$(3,1,1,1,1,1)$ &$(3,1,1,1,1,1)$ &$(4,2,2)$ &$(4,2,2)$\\
 & &$(4,2,1,1)$ &$(4,2,1,1)$\\
 & &$(2,1,1,1,1,1,1)$ &$(3,3,2)$\\
 & &$(1,1,1,1,1,1,1,1)$ &$(3,3,1,1)$\\
 & &$(2,2,1,1,1,1)$ &$(3,2,2,1)$
\end{tabular}
\end{table}

\section{Conclusion}
Several questions present themselves from this study. We collect them here to motivate further work.
\begin{enumerate}[1)]
\item The bijection between partition sets enumerated by $a(n)$ and $b(n)$, or between $c(n)$ and $b(n)$ is still unclear.
\item A more ambicious task than 1) is to ``complete the puzzle'' and find the family of partitions enumerated by certain $E_{j,k}(n)$ such that $O_{j,k}(n)=D_{j,k}(n)=E_{j,k}(n)$ for all $j\geq 1$ and our Theorem~\ref{k analog:a=b=c} becomes the special case of $j=1$. The differentiation technique suggests the complexity of the computation and reverse engineering our proof to track down this third family becomes less optimistic.
\item Theorem~\ref{jk analog} sets the classical Euler's partition theorem at the center of a natural family of partition theorems. On a general note, can we try this new perspective with other classical partition theorems? More precisely, for a partition theorem that links those partitions of $n$ without certain parts of ``type A'' (even parts in the case of Euler's theorem), with those partitions of $n$ without certain parts of ``type B'' (repeated parts in the case of Euler's theorem), we can consider what happens if we require that there are exactly $j$ different parts of type A (resp. type B) in the first (resp. the second) set of partitions.
\end{enumerate}

\section*{Acknowledgement}
Both authors were supported by the Fundamental Research Funds for the Central Universities (No.~CQDXWL-2014-Z004) and the National Science Foundation of China (No.~115010\\61).

\end{document}